\documentclass[11pt,reqno]{amsart}
\usepackage{amsmath,amssymb,amsthm,amsfonts,enumerate,hyperref}
\usepackage{graphicx}
\usepackage{float}
\pagestyle{plain}

\theoremstyle{plain}
\newtheorem{theorem}{Theorem}[section]
\newtheorem{proposition}[theorem]{Proposition}
\newtheorem{cor}[theorem]{Corollary}

\theoremstyle{definition}

\theoremstyle{remark}
\newtheorem{remark}[theorem]{Remark}

\numberwithin{equation}{section}

\begin{document}
\title{Bounding the first invariant eigenvalue of toric K\"ahler manifolds}
\begin{abstract}
We generalise a theorem of Engman and Abreu--Freitas on the first invariant eigenvalue of non-negatively curved $S^{1}$-invariant metrics on $\mathbb{CP}^{1}$ to general toric K\"ahler metrics with non-negative scalar curvature. In particular, a simple upper bound of the first non-zero invariant eigenvalue for such metrics on complex projective space $\mathbb{C}P^{n}$ is exhibited. We derive an analogous bound in the case when the metric is extremal and a detailed study is made of the accuracy of the bound in the case of Calabi's extremal metrics on $\mathbb{C}P^{2}\sharp -\mathbb{C}P^{2}$.
\end{abstract}
\author{Stuart J. Hall}
\address{Department of Applied Computing, University of Buckingham, Hunter St., Buckingham, MK18 1G, U.K.} 
\email{stuart.hall@buckingham.ac.uk}
\author{Thomas Murphy}
\address{Department of Mathematics, California State University Fullerton, 800 N. State College Blvd., Fullerton, CA 92831, USA.}
\email{tmurphy@fullerton.edu}
\maketitle
\section{Introduction}

In this note we investigate the properties of the first \emph{invariant} non-zero eigenvalue of the ordinary Laplacian for toric K\"ahler metrics on a closed manifold $M$. Given a particular toric K\"ahler metric, we will denote this quantity $\lambda_{1}^{\mathbb{T}}$ to distinguish it from the first non-zero eigenvalue $\lambda_{1}$. It is trivial to see that ${\lambda_{1}\leq \lambda_{1}^{\mathbb{T}}}$. The investigation into this quantity began with the works of Engman \cite{E1}, \cite{E2} and Abreu--Freitas \cite{AF} who studied $\mathbb{S}^{1}$-invariant metrics on $\mathbb{S}^{2}$.  In particular Abreu--Freitas proved that, amongst $\mathbb{S}^{1}$-invariant metrics with a fixed volume, $\lambda_{1}^{\mathbb{T}}$ can take any value in $(0,\infty)$.  This is in stark constrast to $\lambda_{1}$ which satisfies, for any Riemannian metric $g$,
$$\lambda_{1} \leq \frac{8\pi}{vol(\mathbb{S}^{2},g)}$$ 
due to a celebrated result of Hersch \cite{H}. If one assumes that the metric satisfies certain geometric properties then one has more control of the quantity $\lambda_{1}^{\mathbb{T}}$.  Under the assumption that $g$ has non-negative Gauss curvature, both Engman and Abreu--Freitas proved that $\lambda_{1}^{\mathbb{T}}$ is bounded (again over the set of metrics with fixed volume). Our main theorem is a generalisation of this result to higher dimensional compact toric K\"ahler metrics. The hypothesis on the Gauss curvature becomes an assumption about the scalar curvature of a toric K\"ahler metric $\omega$.  The assumption about volume can be interpreted as fixing the K\"ahler class ${[\omega]\in H^{2}(M;\mathbb{R})}$ of the metric.  
\begin{theorem}\label{T1}
Let $(M,\omega)$ be a compact toric K\"ahler metric with non-negative scalar curvature.  Then $\lambda_{1}^{\mathbb{T}}$ is bounded above by a quantity that only depends upon the cohomology class ${[\omega]}$.
\end{theorem}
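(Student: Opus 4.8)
The plan is to work in Abreu's symplectic (action--angle) description of the metric, reduce the computation of $\lambda_{1}^{\mathbb{T}}$ to a variational problem on the moment polytope $P$, test that problem against affine functions, and finally convert the scalar-curvature hypothesis into a bound depending only on $P$ via an integration by parts and the Guillemin--Abreu boundary conditions. Let $P\subset\RR^{n}$ be the Delzant moment polytope of $(M,\omega)$, which is determined up to translation by $[\omega]$; write $P=\bigcap_{r}\{x:\ell_{r}(x)\ge 0\}$ with $\ell_{r}(x)=\langle\nu_{r},x\rangle-c_{r}$, where $\nu_{r}$ is the primitive inward normal of the facet $F_{r}$. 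Over the open dense orbit there are coordinates $(x,\theta)\in P^{\circ}\times\mathbb{T}^{n}$ with $\omega=\sum_{i}dx_{i}\wedge d\theta_{i}$ and $g=\sum_{ij}u_{ij}\,dx_{i}dx_{j}+\sum_{ij}u^{ij}\,d\theta_{i}d\theta_{j}$, where $u$ is the symplectic potential and $G:=(u^{ij})=(\Hess u)^{-1}$. Although $\Hess u$ blows up logarithmically along $\partial P$, the matrix $G$ extends smoothly to $\bar{P}$ and satisfies, on each $F_{r}^{\circ}$, the Guillemin--Abreu boundary conditions $\sum_{j}G^{ij}\nu_{r,j}=0$ and $\sum_{ij}\nu_{r,i}\,\partial_{x_{j}}G^{ij}=2$. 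Since the symplectic and Riemannian volume forms coincide, $dV_{g}=dx\,d\theta$; a $\mathbb{T}$-invariant function is a function $f=f(x)$ of the moment variables, with $\int_{M}f\,dV_{g}=(2\pi)^{n}\int_{P}f\,dx$ and $\int_{M}\abs{\D f}_{g}^{2}\,dV_{g}=(2\pi)^{n}\int_{P}\langle G\D f,\D f\rangle\,dx$. Hence
\begin{equation*}
\lambda_{1}^{\mathbb{T}}=\inf\left\{\frac{\int_{P}\langle G\D f,\D f\rangle\,dx}{\int_{P}(f-\bar f)^{2}\,dx}\ :\ f\in C^{\infty}(\bar{P}),\ f\not\equiv\mathrm{const}\right\},\qquad \bar f:=\vol(P)^{-1}\!\int_{P}f\,dx .
\end{equation*}

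I would then test against the affine functions $f(x)=\langle a,x\rangle$, $a\in\RR^{n}\setminus\{0\}$, which pull back through the moment map to smooth $\mathbb{T}$-invariant functions on $M$ and satisfy $\D f\equiv a$. Writing $\Sigma$ for the positive-definite covariance matrix $\Sigma_{kl}=\int_{P}(x_{k}-\bar x_{k})(x_{l}-\bar x_{l})\,dx$ of $P$, this yields $\lambda_{1}^{\mathbb{T}}\,\langle\Sigma a,a\rangle\le\langle(\int_{P}G\,dx)\,a,a\rangle$ for every $a$; taking $a=e_{k}$ and summing over $k=1,\dots,n$ gives
\begin{equation*}
\lambda_{1}^{\mathbb{T}}\ \le\ \frac{\tr\!\big(\int_{P}G\,dx\big)}{\tr\Sigma}\ =\ \frac{\int_{P}\tr G\,dx}{\sum_{k}\int_{P}(x_{k}-\bar x_{k})^{2}\,dx}.
\end{equation*}
The denominator is a function of $P$ only, hence of $[\omega]$ only, so the theorem reduces to bounding $\int_{P}\tr G\,dx$ in terms of $P$.

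This reduction is where the curvature hypothesis enters, and is the heart of the matter. By Abreu's formula the scalar curvature is $S=-\sum_{ij}\partial_{x_{i}}\partial_{x_{j}}G^{ij}$, so $S\ge0$ means $\sum_{ij}\partial_{x_{i}}\partial_{x_{j}}G^{ij}\le 0$ on $P$. Fix $x_{0}\in\RR^{n}$ and set $\phi(x)=\tfrac12\abs{x-x_{0}}^{2}$, so that $\Hess\phi=I$, $\phi\ge0$, and $\sum_{ij}(\Hess\phi)_{ij}G^{ij}=\tr G$. Integrating by parts twice over $P$ — the first boundary integral vanishing because $\sum_{j}G^{ij}\nu_{r,j}=0$ on each $F_{r}$, the second being evaluated by $\sum_{ij}\nu_{r,i}\partial_{x_{j}}G^{ij}=2$ — produces
\begin{equation*}
\int_{P}\tr G\,dx\ =\ -\int_{P}\phi\,S\,dx\ +\ \sum_{r}\frac{1}{\abs{\nu_{r}}}\int_{F_{r}}\abs{x-x_{0}}^{2}\,d\sigma_{r}\ \le\ \sum_{r}\frac{1}{\abs{\nu_{r}}}\int_{F_{r}}\abs{x-x_{0}}^{2}\,d\sigma_{r},
\end{equation*}
the inequality because $\phi\ge0$ and $S\ge0$. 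The right-hand side is purely polytope-theoretic, so together with the previous display this proves the stated bound; one may further optimise over $x_{0}$, and refine the test-function step, to improve the constant. (In the case of $\mathbb{CP}^{1}$ this recovers an estimate of the Engman and Abreu--Freitas type.)

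The step I expect to be the main obstacle is the careful execution of the two integrations by parts together with the exact identification of the boundary contributions. Since $G$ — unlike $\Hess u$ — is smooth up to $\bar{P}$, the divergence theorem applies without any regularisation, so the real content lies entirely in the Guillemin--Abreu boundary conditions for $G$, which I would recall from the work of Guillemin, Abreu and Donaldson and check in the canonical local model near a facet. A subsidiary point is to verify that the affine functions genuinely belong to $H^{1}(M)$ — immediate, as the moment map is smooth and $G$ is bounded on $\bar{P}$ — and that $\lambda_{1}^{\mathbb{T}}$ does admit the asserted variational characterisation over mean-zero functions on $P$.
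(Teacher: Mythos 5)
Your argument is correct and follows essentially the same route as the paper: test the invariant Rayleigh quotient with affine functions of the moment coordinates and bound the energy term $\int_{P}u^{ij}F_{ij}\,d\mu$ for a non-negative convex quadratic $F$ by a boundary integral over $\partial P$, using $S\ge 0$. The only differences are cosmetic --- you rederive the relevant case of Donaldson's integration-by-parts formula (Equation \ref{IBP}) directly from the Guillemin--Abreu boundary behaviour of $(u^{ij})$ rather than citing it, and you take the trace over coordinate directions instead of optimising over all directions $b$, which costs nothing for the qualitative statement.
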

The dependence of the bound on the cohomology class manifests itself as a bound depending upon the moment polytope associated to the torus action. The moment polytope only depends upon the K\"ahler class $[\omega]$. In the course of the proof of Theorem \ref{T1} it becomes clear that it is possible to state the bound in terms of the integral (calculated with a particular measure) of certain polynomials over the boundary of the moment polytope.  In the case of $\mathbb{CP}^{n}$ these integrals are very easy to compute as the moment polytope is a simplex.
\begin{cor}\label{C1}
Let $\omega$ be a toric K\"ahler metric on $\mathbb{CP}^{n}$ with $[\omega] = c_{1}(\mathbb{CP}^{n})$. If $\omega$ has non-negative scalar curvature then 
$$\lambda_{1}^{\mathbb{T}}\leq n+2.$$
\end{cor}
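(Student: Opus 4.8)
The plan is to run the argument behind Theorem~\ref{T1} with the particular test functions that a simplex makes available, for which every integral can be evaluated exactly. Recall the data supplied by the proof of Theorem~\ref{T1}: in action-angle coordinates $(x,\theta)\in P\times\mathbb{T}$ a toric K\"ahler metric on $M$ is encoded by a symplectic potential $u$ on the moment polytope $P$; the Riemannian volume form is the flat form $dx\,d\theta$; a $\mathbb{T}$-invariant function is just a function $f=f(x)$ on $P$, with $\abs{\D f}^{2}=u^{ij}\partial_{i}f\,\partial_{j}f$, where $(u^{ij})=(\Hess u)^{-1}$. Hence the invariant spectrum is that of $f\mapsto-\partial_{i}(u^{ij}\partial_{j}f)$ on $L^{2}(P,dx)$ and
$$\lambda_{1}^{\mathbb{T}}=\inf\Big\{\tfrac{\int_{P}u^{ij}\partial_{i}f\,\partial_{j}f\,dx}{\int_{P}f^{2}\,dx}\ :\ \textstyle\int_{P}f\,dx=0,\ f\not\equiv0\Big\}.$$
Testing with the $n$ linear functions $f_{k}=x_{k}-c_{k}$, where $c$ is the centroid of $P$ (so that each $f_{k}$ is $L^{2}(P,dx)$-orthogonal to the constants), gives $\lambda_{1}^{\mathbb{T}}\int_{P}f_{k}^{2}\,dx\le\int_{P}u^{kk}\,dx$ for each $k$; summing over $k$ yields
$$\lambda_{1}^{\mathbb{T}}\ \le\ \frac{\int_{P}\tr(u^{ij})\,dx}{\int_{P}\abs{x-c}^{2}\,dx},\qquad \tr(u^{ij}):=\textstyle\sum_{k}u^{kk}.$$

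Next I would bound the numerator using Abreu's formula $S=-\partial_{i}\partial_{j}u^{ij}$ together with the toric integration-by-parts identity: for $\phi\in C^{\infty}(\bar P)$,
$$\int_{P}S\,\phi\,dx=-\int_{P}u^{ij}\,\partial_{i}\partial_{j}\phi\,dx+2\int_{\partial P}\phi\,d\sigma,$$
where $d\sigma$ is the lattice measure on $\partial P$ determined on the facet $\{\ell_{r}=0\}$ by $d\sigma\wedge d\ell_{r}=\pm dx$, with $\ell_{r}$ normalised so that its linear part is the primitive inward normal $\nu_{r}$; the derivation uses the Guillemin boundary conditions for $u$, in particular that $(u^{ij})\nu_{r}=0$ on $F_{r}$. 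Applying this with $\phi=\tfrac12\abs{x-c}^{2}$, for which $\partial_{i}\partial_{j}\phi=\delta_{ij}$ and $u^{ij}\partial_{i}\partial_{j}\phi=\tr(u^{ij})$, and using $S\ge0$ together with $\abs{x-c}^{2}\ge0$, gives
$$\int_{P}\tr(u^{ij})\,dx=\int_{\partial P}\abs{x-c}^{2}\,d\sigma-\tfrac12\int_{P}S\,\abs{x-c}^{2}\,dx\ \le\ \int_{\partial P}\abs{x-c}^{2}\,d\sigma,$$
and therefore, for any toric K\"ahler metric on $M$ with $S\ge0$,
$$\lambda_{1}^{\mathbb{T}}\ \le\ \frac{\int_{\partial P}\abs{x-c}^{2}\,d\sigma}{\int_{P}\abs{x-c}^{2}\,dx}.$$
This is the ``integral of a polynomial over $\partial P$'' form of the bound announced after Theorem~\ref{T1}; the corollary is now just the evaluation of the right-hand side for a simplex.

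For $(\mathbb{CP}^{n},c_{1})$ the moment polytope is the dilated simplex $P=\{x\in\RR^{n}:x_{i}\ge0,\ \sum_{i}x_{i}\le n+1\}$, with vertices $v_{0}=0$, $v_{i}=(n+1)e_{i}$ and centroid $c=(1,\dots,1)$. For the denominator I would use the moment-of-inertia identity for a simplex, $\int_{P}\abs{x-c}^{2}\,dx=\frac{\vol(P)}{(n+1)(n+2)}\sum_{i}\abs{v_{i}-c}^{2}$, together with $\sum_{i}\abs{v_{i}-c}^{2}=n+n(n^{2}+n-1)=n^{2}(n+1)$ and $\vol(P)=(n+1)^{n}/n!$. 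For the numerator, the $S_{n+1}$-symmetry of $P$ makes its $n+1$ facets contribute equally; on the facet $\{x_{1}=0\}$ one has $\abs{x-c}^{2}=1+\sum_{j\ge2}(x_{j}-1)^{2}$, and applying the same moment identity to that $(n-1)$-simplex (a parallel-axis computation, its centroid having each coordinate $\tfrac{n+1}{n}$) collapses each $\int_{\{x_{1}=0\}}(x_{j}-1)^{2}\,d\sigma$ to exactly $\vol_{n-1}$ of the facet, so that $\int_{\partial P}\abs{x-c}^{2}\,d\sigma=(n+1)\cdot n\cdot\frac{(n+1)^{n-1}}{(n-1)!}$. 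Forming the quotient, all powers of $n+1$ and all factorials cancel and one is left with $\lambda_{1}^{\mathbb{T}}\le n+2$.

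Since the analytic work is already carried out in the proof of Theorem~\ref{T1}, I do not expect a deep obstacle here; the two places needing care are getting the normalisation of $d\sigma$ right in the integration-by-parts identity — the case $n=1$, where $P=[0,2]$, must reproduce the Engman--Abreu--Freitas bound for $\mathbb{S}^{2}$, and indeed the ratio there is $2/(2/3)=3=n+2$ — and organising the facet integral, where the exact cancellation down to the clean value $n+2$ is a special feature of the simplex rather than something predictable beforehand.
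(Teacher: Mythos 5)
Your argument is correct and is essentially the paper's: both apply Donaldson's integration-by-parts identity with a quadratic $F$, discard the non-negative term $\int_P S F\,d\mu$, and reduce the corollary to evaluating boundary and bulk moments of linear functions over the simplex (the paper tests with the single coordinate $x_1$ on a centred copy of the simplex and evaluates via the Brion/Lasserre--Avrachenkov formula, where you sum over all coordinates and use centroid/parallel-axis identities -- a cosmetic difference). The only step stated too quickly is the claim that ``$S_{n+1}$-symmetry'' makes all $n+1$ facets contribute equally to $\int_{\partial P}\abs{x-c}^{2}\,d\sigma$: the affine-lattice symmetry exchanging the slanted facet $\{\sum_i x_i=n+1\}$ with a coordinate facet preserves $d\sigma$ but is not a Euclidean isometry, so it does not preserve $\abs{x-c}^{2}$ and that facet requires its own (equally short, parallel-axis) computation -- which does return the same value $n\,\vol(F,d\sigma)$, so your total and the final bound $n+2$ stand.
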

\begin{remark}
 The Abreu--Freitas bound in \cite{AF} for $\mathbb{S}^{1}$-invariant metrics on $\mathbb{S}^{2}$ is  
$$ \lambda_{1}^{\mathbb{T}} <\frac{1}{2}\xi_{1}^{2}\approx 2.89, $$
where $\xi_{1}$ is the first zero of the Bessel function. Hence the bound in Corollary \ref{C1} cannot in general be sharp. The action of $\mathbb{S}^{1}\cong U(1)$ on $\mathbb{S}^{2}$ is a cohomogeneity one action.  One might thus be lead to consider a smaller class of metrics on $\mathbb{CP}^{n}$ invariant under a cohmogeneity one action by $U(n)$. We leave the investigation to the extent to which the bound in Corollary \ref{C1} can be sharpened for the future. 
\end{remark}

Another very natural condition involving the scalar curvature is to require that the metric $g$ is \emph{extremal}. This means that the gradient of the scalar curvature is a real holomorphic vector field. Clearly this is satisfied by K\"ahler metrics with constant scalar curvature (so called cscK metrics). Here we prove: 
\begin{theorem}\label{T2}
Let $(M,\omega)$ be an extremal toric K\"ahler metric. Then $\lambda_{1}^{\mathbb{T}}$ is bounded above by a quantity that only depends upon the cohomology class ${[\omega]}$.
\end{theorem}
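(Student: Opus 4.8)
The plan is to mirror the proof of Theorem~\ref{T1}, replacing the scalar-curvature hypothesis by the observation that an extremal toric metric has scalar curvature which is an \emph{affine} function of the moment map, and that this affine function is itself determined by the polytope. I work in the Guillemin--Abreu picture: if $P\subset\RR^{n}$ is the moment polytope of $(M,\omega)$ and $G=(G^{ij})$ denotes the inverse of the Hessian of a symplectic potential, then in action-angle coordinates $(x,\theta)\in P\times\mathbb{T}^{n}$ the metric is $\sum G_{ij}dx^{i}dx^{j}+\sum G^{ij}d\theta_{i}d\theta_{j}$, the Riemannian volume form is Lebesgue measure $dx\,d\theta$, the scalar curvature is $S=-\sum_{i,j}\partial_{i}\partial_{j}G^{ij}$, and an invariant function is a function $\phi(x)$ with $\abs{\D\phi}^{2}_{g}=\sum_{i,j}G^{ij}\partial_{i}\phi\,\partial_{j}\phi$. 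Hence
$$\lambda_{1}^{\mathbb{T}}=\inf\left\{\frac{\int_{P}\sum_{i,j}G^{ij}\partial_{i}\phi\,\partial_{j}\phi\,dx}{\int_{P}\phi^{2}\,dx}\ :\ \int_{P}\phi\,dx=0\right\}.$$
Plugging in the linear test functions $\phi_{c}(x)=\langle c,x-\bar{x}\rangle$, which have mean zero, where $\bar x$ is the centroid of $P$, gives
$$\lambda_{1}^{\mathbb{T}}\ \le\ \min_{0\neq c\in\RR^{n}}\frac{c^{\mathsf{T}}\!\big(\int_{P}G\,dx\big)c}{c^{\mathsf{T}}\mathcal{C}(P)\,c},\qquad \mathcal{C}(P)_{ij}:=\int_{P}(x_{i}-\bar x_{i})(x_{j}-\bar x_{j})\,dx,$$
and since the covariance matrix $\mathcal{C}(P)$ depends only on $P$, hence only on $[\omega]$, everything reduces to showing that the symmetric matrix with entries $\int_{P}G^{ij}\,dx$ also depends only on $P$.

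This is where extremality enters. By Abreu's characterisation of extremal toric K\"ahler metrics, $(M,\omega)$ is extremal if and only if $S$ is an affine function of $x$, say $S(x)=\langle A,x\rangle+B$. Combined with Donaldson's integration-by-parts identity
$$\int_{P}\sum_{i,j}G^{ij}\,\frac{\partial^{2}f}{\partial x_{i}\partial x_{j}}\,dx\ =\ 2\int_{\partial P}f\,d\sigma-\int_{P}S f\,dx\qquad\big(f\in C^{\infty}(\overline{P})\big),$$
where $d\sigma$ is the Lebesgue-type measure induced on the facets of $P$ by the lattice, this pins $S$ down: applying the identity to the affine functions $f=1,x_{1},\dots,x_{n}$, for which the left-hand side vanishes, shows that $(A,B)$ solves the linear system $\int_{P}\big(\langle A,x\rangle+B\big)f\,dx=2\int_{\partial P}f\,d\sigma$, whose Gram matrix on $L^{2}(P,dx)$ is invertible; so $S$ is a completely explicit polytope invariant. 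Now apply the identity to the quadratics $f=x_{i}x_{j}$, for which $\sum_{k,l}G^{kl}\partial_{k}\partial_{l}f=2G^{ij}$; this yields
$$\int_{P}G^{ij}\,dx\ =\ \int_{\partial P}x_{i}x_{j}\,d\sigma\ -\ \tfrac12\int_{P}\big(\langle A,x\rangle+B\big)x_{i}x_{j}\,dx,$$
in which every term is a polynomial integral over $P$ or over $\partial P$ with coefficients determined by $P$. Hence $\int_{P}G\,dx$, and with it the bound displayed above for $\lambda_{1}^{\mathbb{T}}$, depends only on $[\omega]$, which proves the theorem.

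The step I expect to need the most care is precisely the identification of $S$ with a fixed affine function of the moment coordinates: in Theorem~\ref{T1} the analogous term $\int_{P}Sf\,dx$ was controlled only qualitatively, through $S\ge0$ and the topological value of $\int_{P}S\,dx$, whereas here it is evaluated exactly and no curvature sign assumption is needed. Beyond this one should check the routine points underlying the computation: that the $x_{k}$ are the components of a genuine moment map and hence extend to smooth invariant functions on $M$; that Donaldson's boundary identity applies, which rests on the Guillemin boundary behaviour of $G$ on each facet $F_{r}=\{\ell_{r}=0\}$ with primitive inward normal $\nu_{r}$, namely $G\nu_{r}=0$ and $\partial_{\nu_{r}}(\nu_{r}^{\mathsf{T}}G\,\nu_{r})=2$ on $F_{r}$; and that $[\omega]$ determines $P$ up to lattice-affine equivalence, under which all the integrals above transform consistently. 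One may additionally optimise the linear test functions over translates of $P$, but this refinement is not needed for the stated conclusion.
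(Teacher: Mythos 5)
Your proposal is correct and follows essentially the same route as the paper: extremality forces $S$ to be affine in the moment coordinates, Donaldson's integration-by-parts formula applied to affine test functions determines that affine function purely from the polytope, and then the argument of Theorem \ref{T1} (quadratic $F$, linear test functions, Rayleigh quotient) goes through with the $\int_{P}SF\,d\mu$ term evaluated exactly rather than discarded by a sign assumption. Your write-up simply makes explicit the steps the paper compresses into ``the proof now follows in a similar way to that of Theorem \ref{T1}.''
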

The proofs follow the ideas used in \cite{HM1} and \cite{HM2} to investigate the spectrum of the Laplacian of a variety of Einstein metrics with large symmetry groups. The first ingredient is to use the framework developed by Guillemin \cite{G} and Abreu \cite{A} to give coordinates in which various geometric quantities are easy to compute. Secondly we employ an important integration-by-parts formula due to Donaldson \cite{D} (Equation \ref{IBP}).

Finally let us mention the very recent work of Legendre-Sena-Dias \cite{LSD}. They prove that for any toric K\"ahler class $[\omega]$, $\lambda_{1}^{\mathbb{T}}$ can take any value in $(0,\infty)$. This generalises the Abreu-Frietas theorem on $\mathbb{S}^{2}$. They also prove that $\lambda_{1}$ is bounded by a quantity that only depends upon $[\omega]$. 
\bigskip

\noindent \emph{Acknowledgements} We wish to thank Robert Haslhofer for productive conversations. He  suggested investigating Donaldon's integration-by-parts formula to tackle the related problem of calculating the first nonzero eigenvalue of the Laplacian for the CLW metric, using ideas in \cite{HM14}.  We also thank Rosa Sena-Dias and Eveline Legendre for sharing their preprint with us. 

\section{The proof of Theorems \ref{T1} and \ref{T2}}
We begin by recalling the key ideas involved in the Guillemin--Abreu theory of toric K\"ahler metrics.  Associated to any Hamiltonian action of an $n$-torus $\mathbb{T}^{n}$ on a K\"ahler manifold $(M^{2n}, \omega, J)$ ($2n$ is the real dimensions of $M$) is a compact convex polytope ${P\subset\mathbb{R}^{n}}$. The polytope $P$ can always be described as the intersection of the hyperplanes defined by ${\psi_{k}(x)>0}$ where $\psi_{k}(x)$, $k=1,2,...,r$ are certain affine linear functions. Crucially the polytope $P$ depends only on $[\omega]$. There is a dense open set  $M^{\circ}\subset M$ such that $M^{\circ} \cong P^{\circ} \times \mathbb{T}^{n}$. On this open set one can write the metric $\omega$ in the coordinates corresponding to the obvious ones on $P^{\circ}\times \mathbb{T}$. The metric $g=\omega(J\cdot,\cdot)$ can be written as
$$g = u_{ij}dx_{i}dx_{j}+u^{ij}d\theta_{i}d\theta_{j},$$
where $u_{ij}$ is the Euclidean Hessian of a convex function ${u:P^{\circ}\rightarrow\mathbb{R}}$ and $u^{ij}$ is the inverse of this matrix. The function $u$ is referred to as \emph{the symplectic potential}.  A key result of Guillemin \cite{G} is that the coordinate sigularity the metric develops on the boundary is prescribed by the polytope $P$.  He proved that one can always find a smooth function ${f:P\rightarrow\mathbb{R}}$ such that
\begin{equation}\label{symppot}
u(x_{1},..,x_{n}) = \frac{1}{2}\left(\sum_{k=1}^{k=r}\psi_{k}(x)\log (\psi_{k}(x))\right)+f(x).
\end{equation}
Abreu \cite{A} \cite{A2} pioneered the application of this representation of toric K\"ahler metrics to problems involving metrics of special curvature leading to the elegant formula
\begin{equation}\label{Abreueqn}
S=-u^{ij}_{ij}.
\end{equation}   
Building on this work, Donaldson proved an integration-by-parts result as part of his broader program considering the existence theory for solutions of Abreu's equation (\ref{Abreueqn}) where one prescribes the scalar curvature $S$ \cite{D}. In order to state it we need to introduce the idea of the \emph{integral Lesbegue measure}.  The fundamental region of the ordinary lattice $\mathbb{Z}^{n} \subset \mathbb{R}^{n}$ has measure 1 using the usual Lesbegue measure.  A rational linear subspace $H^{d}\subset\mathbb{R}^{n}$ defines a sublattice $H\cap \mathbb{Z}^{n} \cong \mathbb{Z}^{d}$ acting on $H$ by translation.  One then defines a fundamental region of $H$ to be the orbit space of this action.  The integral Lesbegue measure $\sigma$ is then the Lesbegue measure normalised so that this region has measure 1. Translating this measure defines a measure on any affine subsapce parallel to $H$. As an example, the hypotenuse of the triangle with vertices $(0,0), (0,1)$ and $(1,0)$ has length 1 rather than $\sqrt{2}$ with respect to this measure. The classification of moment polytopes of toric varieties due to Delzant \cite{TD} (the admissable polytopes are known as Delzant polytopes) means that the faces are always affine translates of rational linear subspaces and so each face can be given the integral Lesbegue measure.

\begin{proposition}[Donaldson's Integration-by-parts formula, Lemma 3.3.5 in \cite{D}]
Let $u$ be symplectic potential of the form \ref{symppot} for the polytope $P$ and let ${F\in C^{\infty}(P)}$. Then
\begin{equation}\label{IBP}
\int_{P}u^{ij}F_{ij} d\mu= \int_{\partial P}2F d\sigma-\int_{P}SF d\mu. 
\end{equation}
where $S$ is the scalar curvature of the metric defined by $u$, $\mu$ is the usual Lesbegue measure on $\mathbb{R}^{n}$ and $\sigma$ is the integral Lesbegue measure on the boundary $\partial P$.
\end{proposition}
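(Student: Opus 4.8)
The plan is to derive (\ref{IBP}) by integrating by parts twice over $P^{\circ}$ and then identifying the boundary contributions through the local normal form (\ref{symppot}) for $u$ near $\partial P$. Writing $\nu$ for the outward Euclidean unit normal to $\partial P$ and $dS$ for the Euclidean hypersurface measure on $\partial P$, the formal two-step computation reads
\begin{align*}
\int_{P}u^{ij}F_{ij}\,d\mu
&= \int_{\partial P}u^{ij}F_{i}\nu_{j}\,dS - \int_{P}\partial_{j}u^{ij}\,F_{i}\,d\mu\\
&= \int_{\partial P}\bigl(u^{ij}F_{i}\nu_{j}-\partial_{j}u^{ij}\,F\,\nu_{i}\bigr)\,dS + \int_{P}\partial_{i}\partial_{j}u^{ij}\,F\,d\mu ,
\end{align*}
and Abreu's identity (\ref{Abreueqn}), $\partial_{i}\partial_{j}u^{ij}=-S$, turns the last integral into $-\int_{P}SF\,d\mu$. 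Everything then reduces to showing that the boundary term equals $\int_{\partial P}2F\,d\sigma$.

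I would evaluate the boundary term one facet at a time. Fix a facet $\{\psi_{k}=0\}$ of $P$. Since $P$ is Delzant, the primitive conormal $\nabla\psi_{k}$ completes to a $\mathbb{Z}$-basis at each vertex of the facet, so an affine transformation in $\mathrm{SL}(n,\mathbb{Z})$ sends $\psi_{k}$ to the coordinate $y_{1}$ and the facet to $\{y_{1}=0\}$; such a map preserves the Lebesgue measure $\mu$ and, by the very definition of the integral Lebesgue measure, preserves $\sigma$ on the facet, while $u^{ij}$ transforms as a contravariant symmetric $2$-tensor. It therefore suffices to work in these coordinates. Near the relative interior of $\{y_{1}=0\}$ the remaining $\psi_{j}\log\psi_{j}$ terms are smooth, so (\ref{symppot}) gives $u=\tfrac12 y_{1}\log y_{1}+(\text{smooth})$; differentiating twice, $u_{ij}$ blows up like $\tfrac{1}{2y_{1}}$ in the $y_{1}$-direction and stays bounded transversally. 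Inverting this (the Guillemin--Abreu boundary conditions) shows that $u^{ij}$ extends smoothly up to $\{y_{1}=0\}$ with
$$u^{1j}(0,y')=0\quad(1\le j\le n),\qquad \partial_{1}u^{11}(0,y')=2,$$
and with $(u^{ab})_{a,b\ge 2}$ restricting on the facet to its induced Riemannian metric. On $\{y_{1}=0\}$ the outward normal is $-\partial_{y_{1}}$ and $dS$ coincides with $dy_{2}\cdots dy_{n}=d\sigma$; hence the first boundary term $u^{ij}F_{i}\nu_{j}\,dS\to -u^{1j}F_{j}\,d\sigma$ vanishes, while the second gives $-\partial_{j}u^{ij}F\nu_{i}\,dS\to \partial_{j}u^{1j}F\,d\sigma=\partial_{1}u^{11}F\,d\sigma=2F\,d\sigma$, using that $u^{1a}\equiv 0$ along the facet kills $\partial_{a}u^{1a}$ for $a\ge 2$. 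Summing over all facets produces exactly $\int_{\partial P}2F\,d\sigma$.

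The delicate point — and the one to which Donaldson's Lemma~3.3.5 is really devoted — is to legitimise the two integrations by parts: $u^{ij}$ is a priori defined only on $P^{\circ}$, and although it extends continuously to $\overline{P}$, its first derivatives must be controlled near the faces of codimension $\ge 2$ where several $\psi_{k}$ vanish at once. I would handle this by exhausting $P$ from the inside by the polytopes $P_{\varepsilon}=\{x:\psi_{k}(x)\ge\varepsilon\ \text{for all }k\}$, on which $u$ is smooth up to the boundary, performing the two integrations by parts there with no boundary subtlety, and then letting $\varepsilon\to 0$. The interior integrals converge by dominated convergence, since (\ref{symppot}) forces $u^{ij}$ and $S=-\partial_{i}\partial_{j}u^{ij}$ to extend smoothly, hence boundedly, to $\overline{P}$; the boundary integrals over $\partial P_{\varepsilon}$ converge to the facet integrals computed above, the Delzant condition ensuring that near every vertex all the relevant $\psi_{k}$ can be straightened simultaneously, so that the contributions coming from neighbourhoods of the lower-dimensional strata of $\partial P$ are negligible in the limit. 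Assembling these pieces gives (\ref{IBP}).
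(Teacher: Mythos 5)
The paper does not prove this proposition; it is imported verbatim (up to the factor of $2$ coming from the $\tfrac12$ normalisation in (\ref{symppot})) from Lemma 3.3.5 of Donaldson's paper \cite{D}. Your argument is a correct reconstruction of that proof along essentially the same lines: two integrations by parts combined with Abreu's formula (\ref{Abreueqn}), the identification of the facet contributions via the boundary behaviour $u^{ij}\nu_j=0$ and $\partial_\nu(u^{ij}\nu_i\nu_j)=2$ forced by the Guillemin normal form, and the exhaustion by the polytopes $P_{\varepsilon}$ (with the Delzant condition giving simultaneous straightening at the lower-dimensional strata) to justify the formal computation. I see no gap.
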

\begin{remark}
In the way it is stated in \cite{D}, there is no factor of $2$ in boundary integral.  This is because Donaldson is using a formulation of the theory where the singular part of the symplectic potential (\ref{symppot}) occurs without the factor of $1/2$.  This is the same as a formulation with the torus fibres have volume $(4\pi)^{n}$ as opposed to the more usual $(2\pi)^{n}$. 
\end{remark}
The proof of the Theorem \ref{T1} follows almost immediately.
\begin{proof}(of Theorem \ref{T1})

If one takes 
$$F(x_{1}, x_{2},...,x_{n}) = \frac{1}{2}\left(\sum_{i=1}^{i=n}b_{i}x_{i}\right)^{2}$$ then
$$\int_{P} u^{ij}F_{ij}d\mu = b_{i}b_{j}\int u^{ij} d\mu \leq \int_{\partial P} 2F d\sigma,$$ 
where the inequality follows from Equation (\ref{IBP}) and the assumption of non-negative scalar curvature. Let $\widetilde{x}_{i} = x_{i}+c_{i}$ so that
$$\int_{P}\widetilde{x}_{i} d\mu =0.$$
Then 
$$\int_{P} u^{ij}F_{ij}d\mu = b_{i}b_{j}\int u^{ij}d\mu = \int_{P}|\nabla \phi|^{2}d\mu,$$
where 
$$\phi(x_{1}, x_{2},...,x_{n}) = \sum_{i=1}^{i=n}b_{i}\widetilde{x}_{i}.$$
Hence
$$\lambda_{1}^{\mathbb{T}} \leq \frac{\|\nabla \phi\|^{2}}{\|\phi\|^{2}}\leq \inf 
\left\{ \frac{\int_{\partial P}\left(\sum_{i=1}^{i=n}b_{i}x_{i}\right)^{2} d\sigma}{\int_{P} \left(\sum_{i=1}^{i=n}b_{i}\widetilde{x}_{i}\right)^{2}d\mu} \ : \ (b_{1},b_{2},...,b_{n})\in \mathbb{R}^{n}\backslash \{0\}\right\}.$$
Clearly the quantity on the righthand side of the above inequality only depends upon the polytope data and the theorem follows.
\end{proof}
Before giving the proof of Corollary \ref{C1} we need a technical result on how to integrate polynomials over simplices.  The calculation is essentially combinatorial and given by the following:
\begin{proposition}[Brion \cite{B} and Lasserre--Avrachenko \cite{LA}]\label{simpInt}
Let $P$ be the $d$-dimensional simplex generated as the convex hull of the $(d+1)$ affinely independent vertices $s_{1},s_{2},...,s_{d+1}$ in $\mathbb{R}^{n}$.  Let ${\Phi:\mathbb{R}^{n}\rightarrow\mathbb{R}}$ be a linear function.  Then
\begin{equation}
\int_{P}\Phi^{q} \ d\sigma = vol(P,d\sigma)\frac{d!q!}{(q+d)!}\sum_{k\in \mathbb{N}^{d+1}, |k|=q}\Phi(s_{1})^{k_{1}}\cdot\cdot\cdot \Phi(s_{d+1})^{k_{d+1}},
\end{equation}
where $\sigma$ is the integral Lesbegue measure and $q\in\mathbb{N}$.
\end{proposition}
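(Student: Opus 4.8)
The plan is to pull the integral back to the standard $d$-simplex via barycentric coordinates, where $\Phi$ becomes a linear form in the barycentric variables, and then to evaluate by the multinomial theorem together with the classical Dirichlet integral.

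First I would introduce the affine map $T$ from the standard simplex $\Delta^{d} = \{(t_{1},\dots,t_{d}) : t_{i}\geq 0,\ t_{1}+\dots+t_{d}\leq 1\}\subset\mathbb{R}^{d}$ onto $P$, given by $T(t) = s_{d+1} + \sum_{i=1}^{d} t_{i}(s_{i}-s_{d+1})$. Affine independence of the $s_{i}$ makes $T$ an affine isomorphism onto the affine hull of $P$, so the pushforward under $T$ of Lebesgue measure on $\mathbb{R}^{d}$ is a positive constant $c_{P}$ times the integral Lebesgue measure $\sigma$ on that hull. The value of $c_{P}$ is the only place the normalisation of $\sigma$ intervenes, and it is pinned down by the case $q=0$: the right-hand side of the claimed formula then reduces to $\mathrm{vol}(P,d\sigma)$, while the left-hand side equals $c_{P}\cdot\mathrm{vol}(\Delta^{d},dt) = c_{P}/d!$, so $c_{P} = d!\,\mathrm{vol}(P,d\sigma)$. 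In particular no Jacobian ever has to be computed explicitly.

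Next, writing $t_{d+1} := 1-\sum_{i=1}^{d}t_{i}$ and $a_{i} := \Phi(s_{i})$, affine-linearity of $\Phi$ together with $\sum_{i=1}^{d+1}t_{i}=1$ gives $\Phi(T(t)) = \sum_{i=1}^{d+1} a_{i}t_{i}$, and the multinomial theorem expands $\big(\sum_{i}a_{i}t_{i}\big)^{q}$ as $\sum_{|k|=q} \frac{q!}{k_{1}!\cdots k_{d+1}!}\,a_{1}^{k_{1}}\cdots a_{d+1}^{k_{d+1}}\,t_{1}^{k_{1}}\cdots t_{d+1}^{k_{d+1}}$. Integrating term by term reduces everything to the Dirichlet integral $\int_{\Delta^{d}} t_{1}^{k_{1}}\cdots t_{d+1}^{k_{d+1}}\,dt = \frac{k_{1}!\cdots k_{d+1}!}{(d+|k|)!}$, which I would establish by induction on $d$ starting from the Beta integral $\int_{0}^{1}t^{k_{1}}(1-t)^{k_{2}}\,dt = \frac{k_{1}!\,k_{2}!}{(k_{1}+k_{2}+1)!}$. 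Combining these, the factors $k_{1}!\cdots k_{d+1}!$ cancel, $c_{P} = d!\,\mathrm{vol}(P,d\sigma)$ emerges as a prefactor, and one is left precisely with $\mathrm{vol}(P,d\sigma)\frac{d!\,q!}{(q+d)!}\sum_{|k|=q}\Phi(s_{1})^{k_{1}}\cdots\Phi(s_{d+1})^{k_{d+1}}$.

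The argument has no real obstacle: the potentially delicate point — that $P$ is a $d$-dimensional object inside $\mathbb{R}^{n}$ measured with the integral Lebesgue measure rather than the ambient one — is dealt with once and for all by the constant $c_{P}$, which the $q=0$ case evaluates for us. The remaining content is bookkeeping with the multinomial theorem and the standard Dirichlet integral.
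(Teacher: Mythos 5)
Your argument is correct, and it is worth noting that the paper itself offers no proof of this proposition at all --- it is quoted as a known result of Brion and Lasserre--Avrachenkov --- so you have supplied something the paper delegates to the references. Your route (barycentric parametrisation of $P$ by the standard simplex, the multinomial theorem, and the Dirichlet integral $\int_{\Delta^{d}}t_{1}^{k_{1}}\cdots t_{d+1}^{k_{d+1}}\,dt=k_{1}!\cdots k_{d+1}!/(d+\lvert k\rvert)!$) is the standard elementary derivation, and your handling of the only genuinely delicate point --- that $P$ sits in $\mathbb{R}^{n}$ and carries the integral Lebesgue measure rather than an ambient one --- is the right one: both sides of the identity are homogeneous of degree one in the choice of Lebesgue-class measure on the affine hull, so the normalisation constant $c_{P}$ never needs to be computed and is fixed by the $q=0$ case. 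One cosmetic slip: you first say the pushforward $T_{*}(dt)$ is $c_{P}$ times $\sigma$, but the way you then use $c_{P}$ (namely $\int_{P}f\,d\sigma=c_{P}\int_{\Delta^{d}}f\circ T\,dt$) makes it the reciprocal convention; since $c_{P}$ is in any case determined by the $q=0$ normalisation, this does not affect the argument, but you should state the convention once and stick to it.
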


\begin{proof}(of Corollary \ref{C1})

The polytope in this case can be chosen as the $n$-simplex in $\mathbb{R}^{n}$ with vertices 
$$s_{i}  = (-1,-1,...n,-1,...-1) \textrm{ and } s_{n+1} = (-1,-1,...,-1). $$ 
Considering the notation used in the proof of Thereom \ref{T1}, we will compute with ${F(x)=x_{1}}$. Given the symmetry of the situation, it turns out this achieves the infimum in the bound. It is straightforward to compute $vol(P,d\sigma) =(n+1)^{n}/n!$ and so
$$\int_{P}x_{1}d\sigma = n!\frac{(n+1)^{n}}{n!}\frac{1}{(n+1)!}\sum_{k\in \mathbb{N}^{n+1}, |k|=1}n^{k_{1}}\cdot(-1)^{k_{2}}\cdot...\cdot(-1)^{k_{n+1}}= 0.$$
The $L^{2}$-norm of $x_{1}$ is given by
$$\|x_{1}\|^{2} = \int_{P}x_{1}^{2} \ d\sigma= n!\frac{(n+1)^{n}}{n!}\frac{2!}{(n+2)!}\sum_{k\in \mathbb{N}^{n+1}, |k|=2}n^{k_{1}}\cdot(-1)^{k_{2}}\cdot...\cdot(-1)^{k_{n+1}}.$$
In this case the sum is calculated over indices of the form either containing a single 2 or two 1s and so
$$\sum_{k\in \mathbb{N}^{n+1}, |k|=2}n^{k_{1}}\cdot(-1)^{k_{2}}\cdot...\cdot(-1)^{k_{n+1}}  = \frac{n(n+1)}{2}.$$
Hence
$$\|x_{1}\|^{2} =\frac{(n+1)^{n}}{n!}\cdot\frac{n}{(n+2)}.$$
In order to compute $\int_{\partial P}x_{1}^{2}d\sigma$ define $\widehat{P}_{i}$ to be the $(n-1)$-dimensional simplex generated by the vertices of $P$ not including $s_{i}$. Then
$$\int_{\partial P} x_{1}^{2} \ d\sigma= \sum_{i=1}^{i=n+1}\int_{\widehat{P}_{i}}x_{1}^{2} \ d\sigma.$$
Calculation yields
$$\int_{\widehat{P}_{i}}x_{1}^{2}d\sigma = vol(\widehat{P}_{i},d\sigma)\frac{2!(n-1)!}{(n+1)!}\left(\frac{n(n+1)}{2}\right),$$
for all $0\leq i \leq n+1$. It is again straightforward to compute $${vol(\widehat{P}_{i},d\sigma) = (n+1)^{n-1}/(n-1)!}$$ and so
$$\int_{\partial P} x_{1}^{2}d\sigma = \frac{n(n+1)^{n+1}}{(n+1)!}.$$
The result now follows.
\end{proof}

Finally, we prove Theorem \ref{T2}.
\begin{proof}(of Theorem \ref{T2})
The extremal condition is equivalent to requiring that 
$$S(x) = a_{0}+\sum_{i=1}^{i=n}a_{i}x_{i}$$
for  $a_{0}, a_{1},...,a_{n} \in \mathbb{R}^{n}$. As pointed out by Donaldson, by considering affine $F$ in Equation (\ref{IBP}) we get $n+1$ linear equations for the $a_{i}$ which we can solve in terms of quantities involving only the polytope data.  The proof now follows in a similar way to that of Theorem \ref{T1}.
\end{proof}
\section{Estimates for extremal metrics on toric surfaces}
\subsection{$\mathbb{CP}^{1}\times\mathbb{CP}^{1}$}
This is the simplest example of a toric surface with non-Einstein extremal metrics.  In this case there is a one parameter family of constant scalar curvature metrics.  The family is are formed by taking the product of the Fubini--Study metric on each of the $\mathbb{CP}^{1}$ factors with each factor scaled by the parameter. More precisely, we take the moment polytope of a metric (normalised to have volume $16\pi^{2}$)  to be the rectangle defined by the linear functions
$$\psi_{1}(x) = a+x_{1}, \ \psi_{2}(x) = a-x_{1}, \ \psi_{3}(x) = a^{-1}+x_{2} \textrm{ and } \psi_{4}(x) = a^{-1}-x_{2}$$
where we can take $a\in [1,\infty)$. The metric in this case is then $g_{a} = ag_{FS}\oplus a^{-1}g_{FS}$ where $g_{FS}$ is the Fubini-Study metric on $\mathbb{CP}^{1}$ with volume $4\pi$. The spectrum of $\Delta_{g_{FS}}$ is given by
$$\sigma(\Delta_{g_{FS}}) = \{k(1+k), k\in \mathbb{N}_{0} \}$$
and so
$$\sigma(g_{a}) = \{a^{-1}k(1+k)+al(1+l), k,l \in \mathbb{N}_{0}\}.$$
The full spectrum is the same as the invariant spectrum in this case as each eigenvalue admits an invariant eigenfunction. Hence $\lambda_{1}^{\mathbb{T}}=2a^{-1}$.

\subsection{$\mathbb{CP}^{2}\sharp-\mathbb{CP}^{2}$}

As in the case of $\mathbb{CP}^{1}\times\mathbb{CP}^{1}$, the surface $\mathbb{CP}^{2}\sharp-\mathbb{CP}^{2}$ admits a one-parameter family of extremal metrics due to Calabi \cite{C}. Again the parameter $a$ really represents the underlying cohomology class of the metric. The metrics were described in the Guillemin--Abreu framework by Abreu \cite{A}. However, we will use the description given in \cite{BHJM}. None of the metrics have constant scalar curvature.  The polytope in this case can be described as the trapezium (trapezoid) defined by the linear functions
$$\psi_{1}(x) = 1+x_{1}, \ \psi_{2}(x) = 1+x_{2}, \ \psi_{3}(x) = a+x_{1}+x_{2} \textrm{ and } \psi_{4}(x) = 1-x_{1}-x_{2},$$
where $a\in (-1,2)$.  Taking $a=1$ yields a metric in the first Chern class $c_{1}(M)$ with volume $16\pi^{2}$ and as in the $\mathbb{CP}^{1}\times\mathbb{CP}^{1}$ case, we shall use this as the normalised volume. If $g(a)$ denotes the extremal metric on $\mathbb{CP}^{2}\sharp-\mathbb{CP}^{2}$ in the cohomology class corresponding to the polytope defined by the above linear functions, then the normalised one parameter family is given by
\begin{equation}\label{normeq}
g_{a} = \frac{2\sqrt{2}}{\sqrt{(a+1)(5-a)}}g(a).
\end{equation}
The extremal metrics $g(a)$ are invariant under the cohomogeneity one action of $U(2)$ and so the extremal equation reduces to an ODE which can be solved explicitly. Using the description in \cite{BHJM}, the Euclidean Hessian of $U(2)$-invariant symplectic potentials can be written as
$$ D^{2}u = \frac{1}{2}\left[\begin{array}{cc} \frac{1}{x_{1}+1}+\frac{z^{-1}(t)-1}{2+t} & \frac{z^{-1}(t)-1}{2+t} \\ \frac{z^{-1}(t)-1}{2+t} & \frac{1}{x_{2}+1}+\frac{z^{-1}(t)-1}{2+t} \end{array} \right], $$
where $t=x_{1}+x_{2}$ and $z(t) \in C^{\infty}([-a,1])$. The scalar curvature of the extremal metrics $g(a)$  can be calculated in the manner described in the previous section.  They are given by 
$${S(x_{1},x_{2}) = \alpha(x_{1}+x_{2})+\beta},$$ 
where
$$\alpha = \frac{48(2-a)}{(a+1)(a^{2}-16a+37)} \textrm{ and } \beta = \frac{12(4a-3a^{2}+13)}{(a+1)(a^{2}-16a+37)}.$$ 
The extremal equation is then reduced to the following second order linear ODE for $z$:
\begin{equation*}
z''+\frac{4}{(2+t)}z'-\frac{2(1-z)}{(t+2)^{2}}+\frac{\alpha t}{2(2+t)}+\frac{\beta}{2(2+t)}=0,
\end{equation*}
with boundary conditions $z(1)=z(-a)=0$. This has solution 
\begin{equation*}
z(t) = \frac{(t-1)(a+t)(a^{2}t - 4a^{2} + 2at^{2} + 10at + 36a - 4t^{2} - 33t - 74)}{(t+2)^{2}(a+1)(a^{2}-16a+37)}.
\end{equation*}
There is currently no known closed form for the spectrum of the extremal metrics $g_{a}$ but we can use the procedure described in the proof of Theorem \ref{T2} to compute an explicit upper bound for $\lambda_{1}^{\mathbb{T}}$. If we let
$$\widetilde{x}_{i} = x_{i}-\frac{(2-a)^{2}}{3(5-a)},$$
then  
$$\int_{P}\widetilde{x}_{i} d\mu = 0.$$
Hence we can compute the Rayleigh quotient of the function ${\phi = b_{1}\widetilde{x}_{1}+b_{2}\widetilde{x}_{2}}$ by using the function
$$F =  b_{1}^{2}\frac{x_{1}^{2}}{2}+b_{1}b_{2}x_{1}x_{2}+b_{2}^{2}\frac{x_{2}^{2}}{2}$$
in Equation \ref{IBP}. As the metrics are all invariant under the $\mathbb{Z}_{2}$ action swapping $x_{1}$ and $x_{2}$, we can assume $b_{1}=1$. A lengthy calculation yields (for the unnormalised extremal metric $g(a)$)
$$ \frac{\|\nabla \phi\|^{2}}{\|\phi\|^{2}} = \frac{A(1+b^{2}_{2})+b_{2}B}{C(1+b_{2}^{2})+b_{2}D},$$
where
$$A = \frac{(a + 1)(a^{4} - 14a^{3} + 132a^{2} - 590a + 883)}{10(a^{2} - 16a + 37)},$$
$$B = -\frac{(a + 1)(7a^{4} - 188a^{3} + 1284a^{2} - 3860a + 4381)}{30(a^{2} - 16a + 37)},$$
$$C =  -\frac{(a + 1)(a^{4} - 14a^{3} + 60a^{2} - 158a + 253)}{36(a - 5)},$$
and
$$D = \frac{(a + 1)(a^{4} - 14a^{3} + 114a^{2} - 374a + 469)}{36(a - 5)}.$$
The function $$f(x) = \frac{A(1+x^{2})+Bx}{C(1+x^{2})+Dx}$$ is extremised at $x=\pm1$. If ${AD-BC<0}$, $f$ has a maximum at ${x=1}$ and minimum at $x=-1$, if ${AD-BC>0}$ then the maximum is at $x=-1$ and minimum at $x=1$. This is reflected in the geometry of the metrics which, as mentioned, are invariant under a $\mathbb{Z}_{2}$ action.  This means that the space of smooth functions decomposes as ${C^{\infty}(M) = C^{+}(M)\oplus C^{-}(M)}$ where $C^{+}(M)$ and $C^{-}(M)$ are the spaces of smooth $\mathbb{Z}_{2}$-invariant and anti-invariant functions respectively.  As the Laplacian preserves both of these subspaces the smallest invariant eigenvalue must correspond to an eigenfunction in one (or possibly both) of these spaces. In the case at hand $AD-BC$ is equal to
$$ -\frac{(a - 2)(a^{2} - 7a + 19)(2a^{4} - 85a^{3} + 777a^{2} - 2233a + 1763)(a + 1)^{3}}{540(a - 5)(a^{2} - 16a + 37)},$$
which is less than zero for $a\in (-1,a_{c})$ and greater than zero for $a \in (a_{c},2)$ where $a_{c}\approx 1.2877$. Hence we arrive at
\begin{theorem}\label{Text}
Let $g_{a}$ be the extremal metric on $\mathbb{CP}^{2}\sharp-\mathbb{CP}^{2}$ described by (\ref{normeq}). Then
$$
\lambda_{1}^{\mathbb{T}} \leq \frac{\sqrt{2(a+1)}(13a^{4} - 272a^{3} + 2076a^{2} - 7400a + 9679)}{10\sqrt{(5-a)}(a^2 - 4a + 13)(a^{2} - 16a + 37)} \textrm{ if } -1<a<a_{c},
$$
and 
$$ \lambda_{1}^{\mathbb{T}} \leq -\frac{3\sqrt{2}(5-a)^{3/2}(a^{3} - 105a^{2} + 597a - 917)}{10\sqrt{(a + 1)}(a^{2} - 16a + 37)^{2}} \textrm{ if } a_{c}\leq a<2.
$$
Here $a_c$ is the unique zero of the polynomial 
$$p(a) = 2a^{4} - 85a^{3} + 777a^{2} - 2233a + 1763,$$
in the range $a\in [-1,2]$.
\end{theorem}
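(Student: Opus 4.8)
The plan is to optimise the one-parameter family of Rayleigh bounds produced in the discussion above. By that discussion, for the unnormalised Calabi metric $g(a)$ and the affine function $\phi=\widetilde{x}_{1}+x\,\widetilde{x}_{2}$ one has
\[
\lambda_{1}^{\mathbb{T}}(g(a))\;\le\; f(x):=\frac{A(1+x^{2})+Bx}{C(1+x^{2})+Dx}\qquad\text{for every }x\in\mathbb{R},
\]
where $A,B,C,D$ are the displayed rational functions of $a$; by homogeneity of the Rayleigh quotient in $(b_{1},b_{2})$ we may take $b_{1}=1$, and the excluded case $b_{1}=0$ only produces the value $f(\pm\infty)=A/C$. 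So the sharpest bound obtainable this way is $\inf_{x\in\mathbb{R}}f(x)$, and the whole problem reduces to locating this infimum and then simplifying.

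First I would differentiate $f$. A short computation gives
\[
f'(x)=\frac{(AD-BC)(x^{2}-1)}{\bigl(C(1+x^{2})+Dx\bigr)^{2}},
\]
and the denominator here is, up to a positive constant, $\int_{P}(\widetilde{x}_{1}+x\widetilde{x}_{2})^{2}\,d\mu>0$ (equivalently $C>0$ and $D^{2}-4C^{2}<0$ on $(-1,2)$, which one reads off the formulas). Hence $f$ is a bounded smooth function on $\mathbb{R}$ whose only critical points are $x=\pm1$, with $f(\pm\infty)=A/C$ strictly between the two critical values; one of $f(1),f(-1)$ is therefore the global minimum. Reading off the sign of $f'$, this minimum is $f(-1)=(2A-B)/(2C-D)$ when $AD-BC<0$ and $f(1)=(2A+B)/(2C+D)$ when $AD-BC>0$ --- the two cases corresponding to the antisymmetric and symmetric eigenfunctions allowed by the $\mathbb{Z}_{2}$-symmetry.

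Next I would pin down the threshold $a_{c}$. Assembling $A,B,C,D$, the quantity $AD-BC$ factors as the product displayed above, and on the open interval $(-1,2)$ every factor except $(a-2)$ and $p(a)=2a^{4}-85a^{3}+777a^{2}-2233a+1763$ is of fixed sign: $a+1>0$, $a-5<0$, $a^{2}-7a+19>0$ (negative discriminant) and $a^{2}-16a+37>0$ (its roots $8\pm3\sqrt{3}$ lie outside $[-1,2]$), while $a-2<0$. So on $(-1,2)$ the sign of $AD-BC$ equals $-\operatorname{sgn}p(a)$ up to a fixed overall sign, and it remains only to see that $p$ has a single zero in $[-1,2]$. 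This follows from $p(-1)>0$, $p(1)>0$, $p(2)<0$ together with the observation that $p'(a)=8a^{3}-255a^{2}+1554a-2233$ is negative throughout $[-1,2]$ (it is convex there with negative endpoint values), so $p$ is strictly decreasing on $[-1,2]$; the unique root is $a_{c}\approx1.2877$. Hence $\lambda_{1}^{\mathbb{T}}(g(a))\le(2A-B)/(2C-D)$ for $-1<a<a_{c}$ and $\lambda_{1}^{\mathbb{T}}(g(a))\le(2A+B)/(2C+D)$ for $a_{c}\le a<2$.

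Finally I would substitute and renormalise. Inserting the explicit expressions, one finds $2A-B$ and $2C-D$ (respectively $2A+B$ and $2C+D$) share the common factor $a+1$, with $2C-D$ contributing in addition $(a^{2}-4a+13)(5-a)$ and $2C+D$ contributing $(a+1)^{2}(a^{2}-16a+37)$, after which the quotients collapse to the stated rational functions of $a$ for the metric $g(a)$. Passing to $g_{a}=\tfrac{2\sqrt{2}}{\sqrt{(a+1)(5-a)}}\,g(a)$ scales eigenvalues by $\sqrt{(a+1)(5-a)}/(2\sqrt{2})$, and multiplying through produces the square-root expressions in the statement. The main obstacle is not conceptual but the bulk of this symbolic algebra: confirming that the combinations $2A\mp B$ and $2C\mp D$ simplify to the claimed quartics and that their quotient, once multiplied by the conformal factor, tidies up exactly as displayed, is a computation I would carry out with a computer algebra system; a secondary point to watch is checking that $p$, the linear and quadratic auxiliary factors of $AD-BC$, and the denominators of $f$ have no unexpected zeros in $(-1,2)$.
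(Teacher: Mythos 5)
Your proposal is correct and follows essentially the same route as the paper: form the Rayleigh quotient $f(x)=\frac{A(1+x^{2})+Bx}{C(1+x^{2})+Dx}$ from Donaldson's formula, locate its extrema at $x=\pm1$ via the sign of $AD-BC$, identify $a_{c}$ as the unique root of $p$ in $[-1,2]$, and then simplify $\frac{2A\mp B}{2C\mp D}$ and rescale by the normalisation (\ref{normeq}); I checked that $2A-B$, $2C-D=\frac{(a+1)(a^{2}-4a+13)(5-a)}{12}$, $2A+B=-\frac{(a+1)^{2}(a^{3}-105a^{2}+597a-917)}{30(a^{2}-16a+37)}$ and $2C+D=\frac{(a+1)^{3}(a^{2}-16a+37)}{36(5-a)}$ do yield exactly the stated expressions. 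One small repair: since $p'''(a)=48a-510<0$ on $[-1,2]$, the derivative $p'$ is concave there rather than convex, and a concave function with negative endpoint values need not be negative throughout; the correct elementary argument is that $p''(a)=24a^{2}-510a+1554>0$ on $[-1,2]$ (its roots are $\approx 3.69$ and $\approx 17.6$), so $p'$ is increasing with $p'(2)=-81<0$, whence $p'<0$ and $p$ is strictly decreasing on $[-1,2]$ as you need.
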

This theorem suggests that for small values of the parameter $a$, it is the $\mathbb{Z}_{2}$-anti-invariant functions that contain the minimising eigenfunction but this switches at some point $a_{c}\approx 1.2877$ and then the invariant functions contain the minimising eigenfunction. We can plot the bound as a function of $a$, the result of this is contained in Figure 1.
As we have an explicit form for each extremal metric, we are able to investigate exactly how accurate the bound in Theorem \ref{Text} is. To do this we use the Rayleigh-Ritz approximation where our set of test functions is
$$T = \{1,x_{1}, x_{2}, x_{1}^{2}, x_{2}^{2}, x_{1}x_{2}\} .$$
Labelling $\phi_{1}(x) = x_{1}$, $\phi_{2}(x)=x_{2}$,..., $\phi_{6}(x) = x_{1}x_{2}$, we compute the two $ 6 \times 6$ matrices   
$$M_{ij} = \langle \phi_{i},\phi_{j}\rangle_{L^{2}} \textrm{ and } \widetilde{M}_{ij} = \langle \nabla\phi_{i},\nabla\phi_{j}\rangle_{L^{2}} .$$
The Rayleigh-Ritz approximation is then given by the smallest positive eigenvalue of the matrix $M^{-1}\widetilde{M}$.  We plot this for 100 equally spaced values of $a$ in Figure 2.  As the plot shows, the behaviour of this quantity is very similar to that of the upper bound in Theorem \ref{Text} with the same switching from the smallest eigenfunction being $\mathbb{Z}_{2}$-anti-invariant to $\mathbb{Z}_{2}$-invariant at $a \approx 1.2877$. To show just how close the bound is to this quantity we plot the difference of the bound and the Rayleigh-Ritz approximation in Figure 3.\\
\\
It is also interesting to note that informally at least, as $a\rightarrow 2$, $${(\mathbb{C}P^{2}\sharp -\mathbb{C}P^{2},g_{a}) \rightarrow (\mathbb{CP}^{2},\gamma g_{FS})}$$ where $\gamma= 2\sqrt{2}/3$. It is well-known that $\lambda_{1}= \lambda_{1}^{\mathbb{T}} = 2$ for $g_{FS}$ and so we see that the absolute upper bound for $\lambda_{1}^{\mathbb{T}}$ in the family of extremal metrics is that of the rescaled Fubini-Study metric $3\sqrt{2}/2 \approx 2.12$. Hence in both the $\mathbb{C}P^{1}\times \mathbb{C}P^{1}$ and ${\mathbb{C}P^{2}\sharp -\mathbb{C}P^{2}}$ cases, the value of $\lambda_{1\mathbb{T}}$ for the one parameter family of extremal metrics is bounded above absolutely by the eigenvalue of the K\"ahler-Einstein metric in the family.\\
\\
There are further toric surfaces formed by blowing up points on $\mathbb{C}P^{2}$ which admit extremal metrics as well as higher dimensional examples.  One difficulty in studying these manifolds is that the precise existence results are much more difficult to state. For example, not every K\"ahler class on the surface $\mathbb{C}P^{2}\sharp-2\mathbb{C}P^{2}$ admits an extremal metric. Nevertheless it should be possible to derive analogues of Theorem \ref{Text} quite easily. Testing the accuracy of the bounds will be more challenging as very few of the metrics are known in a closed form.  There are some approaches to numerically approximating such metrics \cite{BD} \cite{HM14} that would allow the calculation of the Rayleigh-Ritz matrices in simple cases.  The authors hope to discuss this in separate work.

\begin{figure}[H]
\centering
\includegraphics[width=\linewidth, height=212pt]{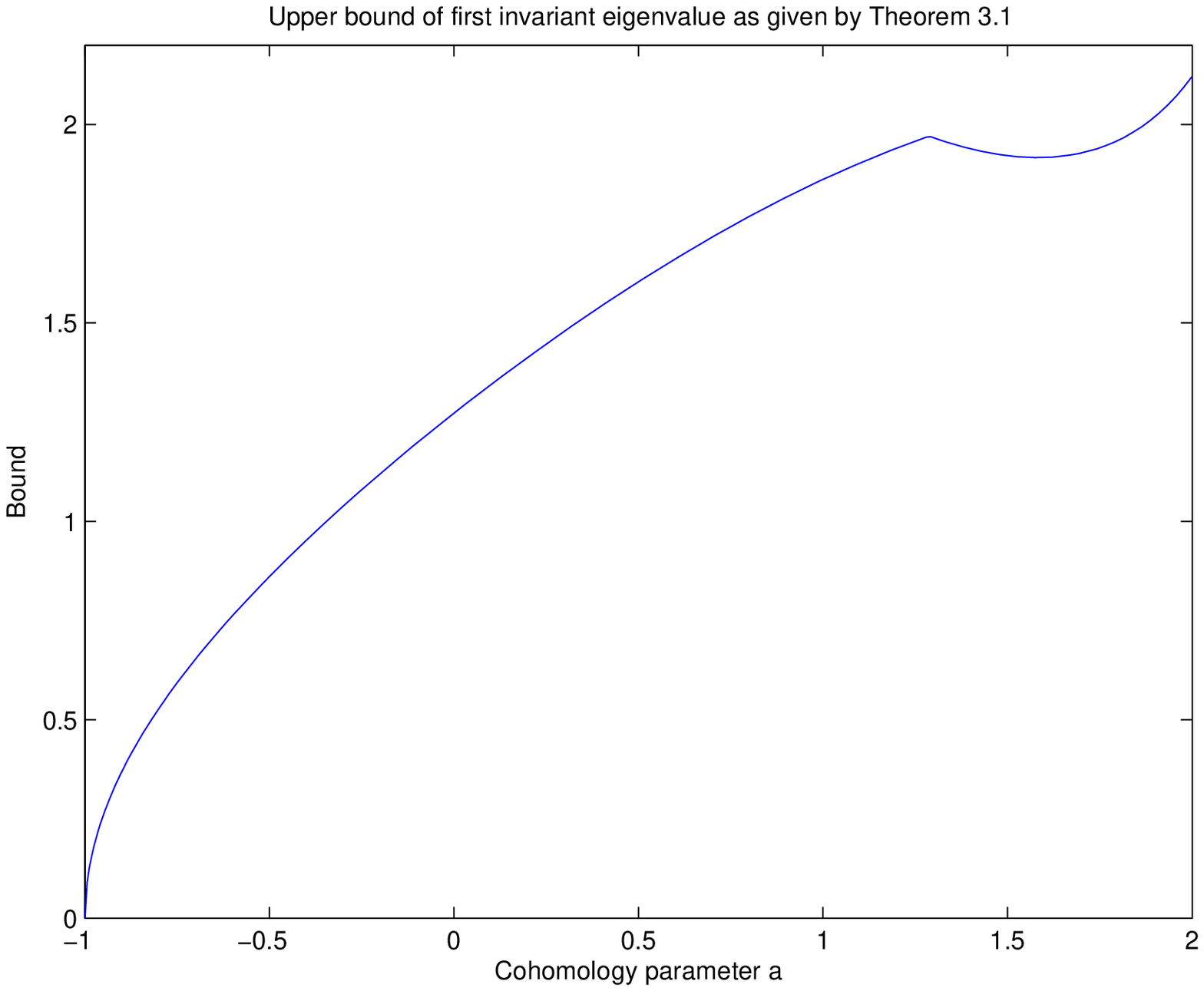}
\begin{small}
\caption{}
\end{small}
\end{figure}
\begin{figure}[H]
\centering
\includegraphics[width=\linewidth, height=212pt]{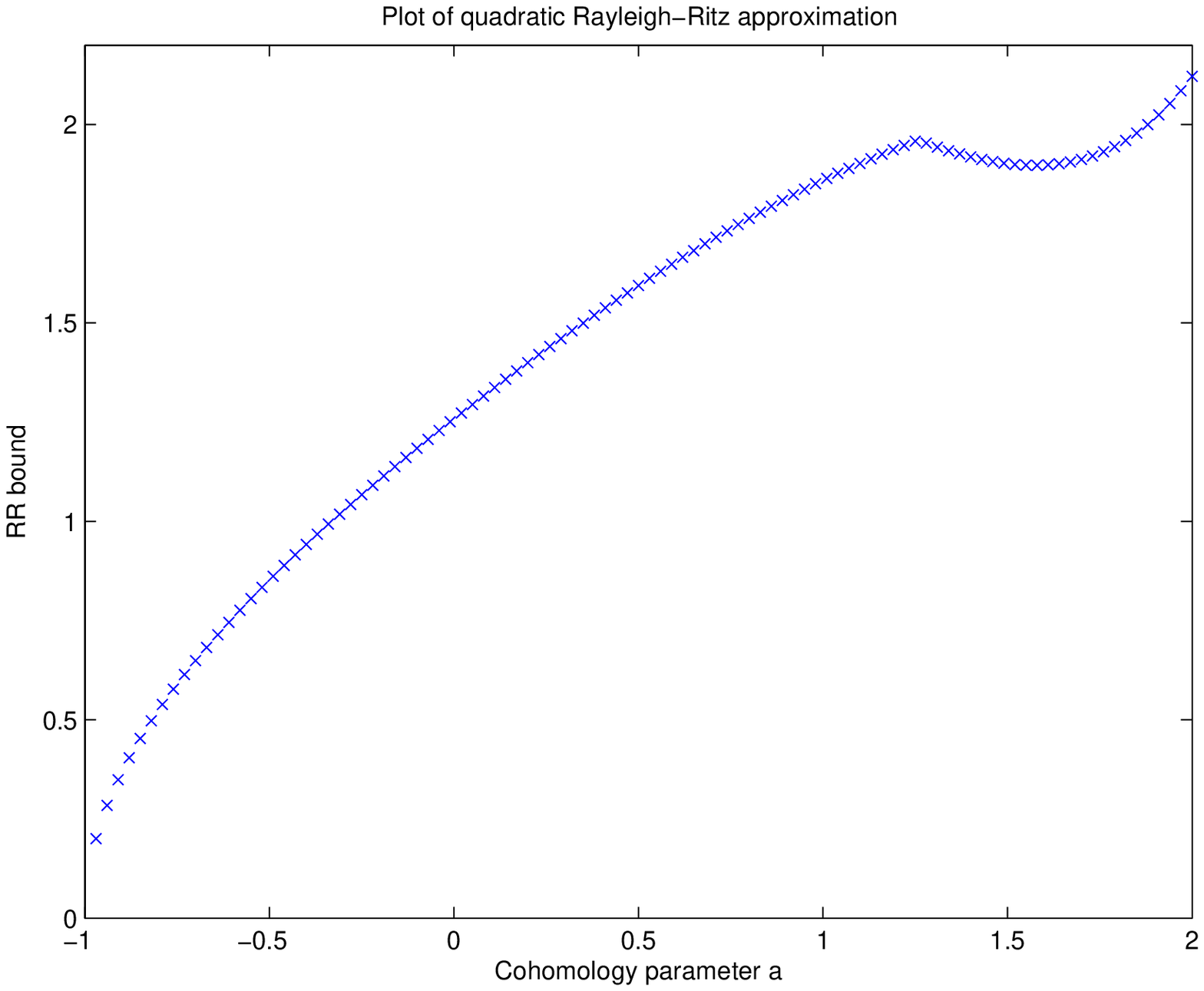}
\begin{small}
\caption{}
\end{small}
\end{figure}
\begin{figure}[H]
\centering
\includegraphics[width=\linewidth, height=212pt]{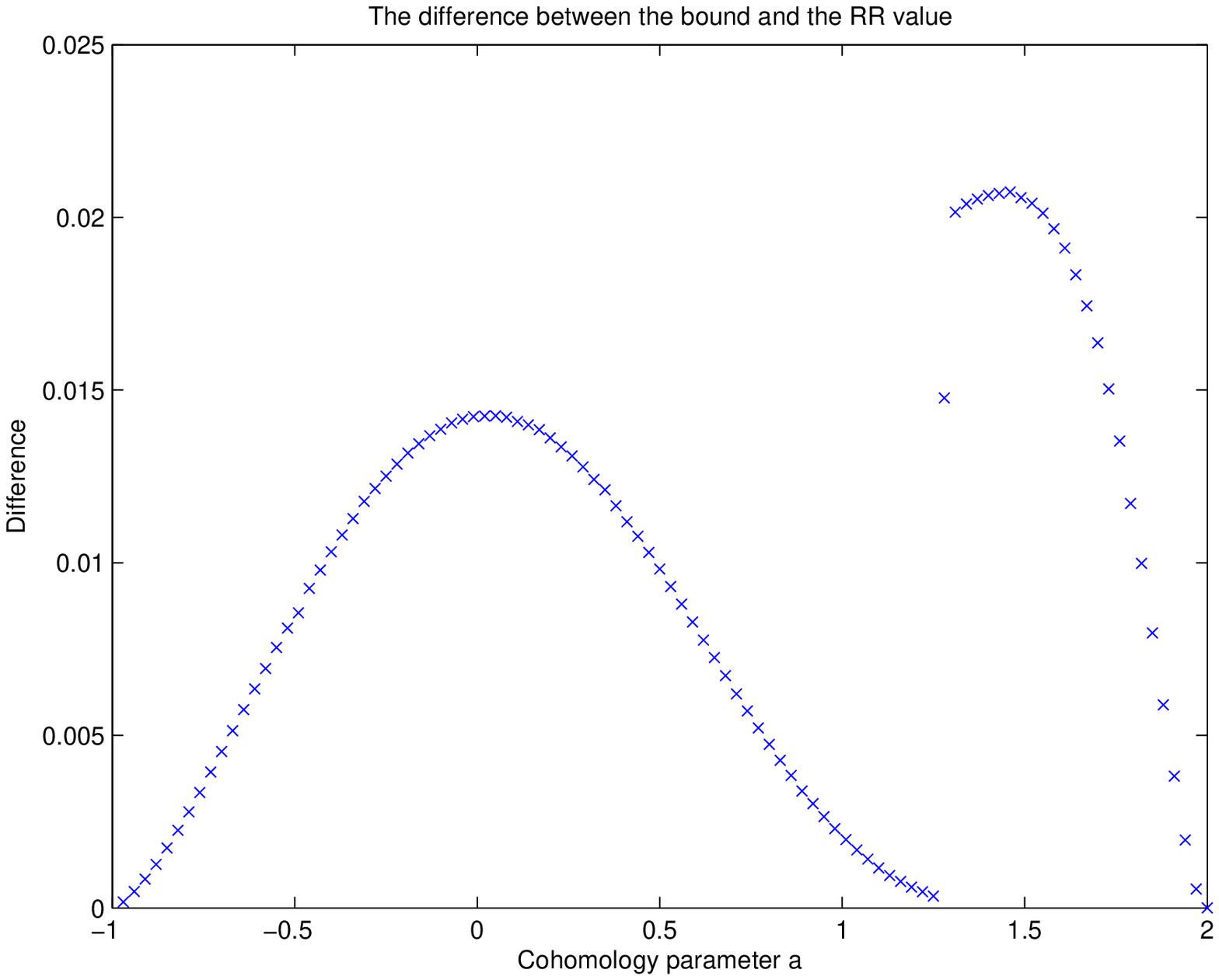}
\begin{small}
\caption{}
\end{small}
\end{figure}

\end{document}